\theoremstyle{plain}
\newtheorem{thm}{Theorem}
\newtheorem{lem}[thm]{Lemma}
\newtheorem{prop}[thm]{Proposition}
\begin{document}

\title[Loewy lengths of centers of blocks]{Loewy lengths of centers of blocks and exponents of defect groups}
\author[Y. Otokita]{Yoshihiro Otokita}
\address{Yoshihiro Otokita: \newline Department of Mathematics and Informatics \newline
Graduate School of Science \newline Chiba University \newline
Chiba 263--8522 Japan}
\subjclass[2010]{20C20}

\maketitle

\begin{abstract}
In this paper we study the Loewy structure of the center $ZB$ of a block of a finite group with respect to an algebraically closed field of prime characteristic. We first state a new method for calculating the Loewy length $LL(ZB)$ of $ZB$ in terms of subsections and lower defect groups. By applying this result we give an upper bound of $LL(ZB)$ which depends on the exponent of a defect group of $B$ for some cases.
\end{abstract}

\section*{Introduction}
The purpose of this paper is to study new methods for calculating the Loewy length of the center of a block of a finite group. We then extend some results in K\"{u}lshammer-Sambale \cite{KS}, and \cite{KOS}.

In the following, let $G$ be a finite group and $F$ an algebraically closed field of characteristic $p>0$. For a block $B$ of the group algebra $FG$, we denote by $ZB$ its center. In modular representation theory, various problems and previous works indicate the relations between $ZB$ and a defect group $D$ of $B$. For example, Brauer {\cite[Problem 20]{Bra1}} conjectured that the number of irreducible ordinary characters associated to $B$, which equals the dimension of $ZB$ over $F$, is bounded above by the order $p^{d}$ of $D$. In particular, there is a possibility that the Loewy structure of $ZB$ is connected with the exponent $p^{e}$ of $D$. For instance, the following results are known:
\begin{itemize}
\item K\"{u}lshammer \cite{K1} proved that $z^{p^{e}}=0$ for any element $z$ in the Jacobson radical of $ZB$. 
\item If $D$ is non-abelian with cyclic maximal subgroup (i.e. $e=d-1$), then
\begin{equation*}
LL(ZB) \le \begin{cases}
                      2^{e-1}+1 & (p=2) \\
                      p^{e-1}     & (p \neq 2)
                 \end{cases}
\end{equation*}    
by {\cite[Proposition 7]{KOS}}, where $LL(ZB)$ is the Loewy length of $ZB$. In general,
\[ LL(ZB) \le \left(\frac{d}{e} + 1\right) \left(\frac{d}{2} + \frac{1}{p-1} \right) (p^{e}-1) \] 
provided $d \ge 1$ by {\cite[Theorem 6]{KOS}}. 
\item If $D$ is abelian, then
\[ \frac{p^{e}+p-2}{p-1} \le LL(ZB) \]
by {\cite[Theorem]{Ot1}} \footnote{This article is a short note that proves Theorem 3 in the preprint below: \\
\ \ \ \ \ \ Y. Otokita, \textit{Notes on Loewy series of centers of $p$-blocks}, arXiv 1806.08029} 
\end{itemize}
Motivated by the facts above, the present paper consider an upper bound on $LL(ZB)$. In the next section (the main part of this paper), we define a function $\rho(d, e)$ which depends more on $e$ than on $d$ and prove that $LL(ZB)$ is bounded above by $\rho(d, e)$ for two cases. Moreover, the third section deals with general bounds for blocks with non-abelian defect groups and we improve a result in \cite{KOS}.

Throughout this paper we use the following notations.

The sum of all elements in a subset $U$ of $G$ is denoted by $U^{+}$. For $a \in FG$ and $g \in G$, we denote by $a(g)$ the coefficient of $g$ in $a$. Moreover, $o(g)$ is the order of $g$. For a finite--dimensional algebra $\Lambda$ over $F$ with Jacobson radical $J(\Lambda)$, its Loewy length $LL(\Lambda)$ is the smallest positive integer $L$ such that $J(\Lambda)^{L}=\{0\}$. Finally, $R\Lambda$ denotes the Reynolds ideal, which is the intersection of the center and the socle of $\Lambda$. 

\section*{Main problem}
First of all, we formulate our problem in this section.

For two non-negative integers $m$ and $n$, we define
\begin{equation*}
\rho (m, n) = \begin{cases}
                           q(p^{n}-1)+p^{r} & (n \neq 0) \\
                           \ \ \ \ \ \ \ \ 1         &      (n=0)
                       \end{cases}
\end{equation*}
where $q$ and $r$ are integers such that $m=qn+r, 0 \le r \le n-1$. Then $\rho$ is a monotone increasing function as follows.

\begin{prop} \label{prop1} Let $m, n, k$ and $l$ are non-negative integers. Then the following hold:
\begin{enumerate}
\item If $m \le k$ and $n \le l$, then $\rho(m, n) \le \rho(k, l)$ with equality if and only if one of the following holds:
\begin{enumerate}
\item $l=0$;
\item $n=l < m=k$;
\item $m=k \le n$.
\end{enumerate}
\item $1 \le \rho(m, n) \le p^{m}$.
\end{enumerate}
\end{prop}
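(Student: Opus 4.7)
My plan is to verify (2) first, since the bound it provides will be reused in (1), and then to establish (1) by proving monotonicity in each variable separately before combining and reading off the equality cases.

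For (2), the lower bound $\rho(m,n) \ge 1$ is immediate from the definition. For the upper bound I would, assuming $n \ge 1$, substitute $m = qn+r$ and use $p^{qn}-1 = (p^n-1)\sum_{i=0}^{q-1} p^{in}$ to produce the identity
\[
p^m - \rho(m,n) = p^r(p^{qn}-1) - q(p^n-1) = (p^n-1)\sum_{i=0}^{q-1}\bigl(p^{in+r}-1\bigr),
\]
which is manifestly non-negative. As a bonus, the sum vanishes precisely when $q=0$ or $(q,r) = (1,0)$, i.e.\ exactly when $m \le n$; in particular $\rho(m,n) = p^m$ on the plateau $n \ge m$, a fact I will use repeatedly.

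For (1), monotonicity in the first variable (with $n \ge 1$ fixed) is a one-step computation. Writing $m = qn+r$, replacing $m$ by $m+1$ either increases $r$ by $1$ (when $r < n-1$), changing $\rho$ by $p^r(p-1) > 0$, or increases $q$ by $1$ and resets $r$ to $0$ (when $r = n-1$), changing $\rho$ by $p^{n-1}(p-1) > 0$. Either way, $\rho(\cdot, n)$ is strictly increasing.

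The main obstacle is monotonicity in the second variable, with the first coordinate fixed at $k$. I would dispose first of the degenerate cases: $l = 0$ forces $n = 0$ and both sides equal $1$ (case (a)); $n = 0 < l$ reduces to $\rho(k,l) \ge 1$ from (2), with equality iff $k = 0$ (case (c)). Now assume $1 \le n \le l$. If $l \ge k$, then $\rho(k,l) = p^k$ by the plateau observation, and (2) gives $\rho(k,n) \le p^k$ with equality iff $k \le n$, which is case (c). The genuinely hard sub-case is $n \le l \le k-1$; here I would induct on $l-n$, reducing to the adjacent step $\rho(k,n) < \rho(k,n+1)$, and verify that step by writing $k = qn + r = q'(n+1) + r'$, relating the two pairs via $q'(n+1) + r' = qn + r$, and checking strict positivity of the difference by a short case analysis on the residues. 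Chaining these with the first monotonicity yields $\rho(m,n) \le \rho(k,n) \le \rho(k,l)$; strict first-variable monotonicity forces $m = k$ outside case (a), and under $m = k$ the second equality holds only when $n = l$ (case (b) when $n < m$) or in the plateau $m \le n$ (case (c)).
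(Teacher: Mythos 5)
Your plan is correct, and for the one genuinely delicate step it lands on the same method as the paper: to show $\rho(k,n)<\rho(k,n+1)$ for $1\le n<k$, both you and the paper compare the two Euclidean divisions $k=qn+r=q'(n+1)+r'$ and run a short case analysis on how $q$ relates to $q'$ (the paper derives $q\le 2q'+1$ from $q = q'(n+1)/n + (r'-r)/n$ and splits into the cases $q=2q'+1$, $q=2q'$ with $r'\ge r$, and $q\le 2q'-1$). You leave that case analysis unexecuted; it is routine but it is where all the work lives, so be aware your sketch stops just short of the only nontrivial computation. Where you genuinely diverge is in the logical organization: the paper proves (1) first and observes that (1) implies (2) (via $\rho(m,n)=p^m$ for $m\le n$ and monotonicity up to $\rho(m,m)=p^m$), whereas you prove (2) independently through the telescoping identity
\[
p^m-\rho(m,n)=(p^n-1)\sum_{i=0}^{q-1}\bigl(p^{in+r}-1\bigr),
\]
which is correct and buys you something the paper only asserts: an exact characterization of the plateau $\rho(m,n)=p^m\iff m\le n$, which you then reuse cleanly in the equality analysis. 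Your treatment of the degenerate cases ($l=0$, $n=0<l$, $l\ge k$) and the chaining $\rho(m,n)\le\rho(k,n)\le\rho(k,l)$ to extract the equality cases (a)--(c) is more explicit than the paper's, which leaves most of that to the reader; your version checks out.
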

\begin{proof}
It is clear that $\rho(m, n) = p^{m}$ if $m \le n$, and $p^{n} < \rho(m, n)$ if $1 \le n<m$. Hence (1) implies (2) and it suffices to prove that $\rho(m, n) < \rho(m+1, n)$ and $\rho(m, n) < \rho(m, n+1)$ for $m > n \ge 1$. We put $m=qn+r$ where $q \ge 1, 0 \le r \le n-1$. Then
\begin{equation*}
m+1 = \begin{cases}
               qn+(r+1) & (0 \le r \le n-2) \\
               (q+1)n+0 & (r=n-1).
            \end{cases}
\end{equation*}
Thus 
\begin{equation*}
\rho(m+1, n) = \begin{cases}
                        q(p^{n}-1)+p^{r+1} & (0 \le r \le n-2) \\
                        (q+1)(p^{n}-1) + 1 = q(p^{n}-1) + p^{n} & (r=n-1)
                      \end{cases}
\end{equation*}
is bounded below by $\rho(m, n)$. We next set $m=s(n+1)+t$, where $s \ge 1, 0 \le t \le n$. Then
\[ q = s\frac{n+1}{n} + \frac{t-r}{n} \le 2s + \frac{t-r}{n} \le 2s+1. \]
If $q=2s+1$, then $n=t=1$ and $r=0$. Hence 
\[ \rho(m, n) = (2s+1)(p-1)+1 < s(p^{2}-1)+p = \rho(m, n+1). \]
If $t-r < 0$, then $q \le 2s + (t-r)/n < 2s$. Therefore we may assume that $q=2s$ and $t-r \ge 0$, or $q \le 2s-1$. In the first case, 
\[ \rho(m, n) = 2s(p^{n}-1)+p^{r} < s(p^{n+1}-1) + p^{t} = \rho(m, n+1). \]
In the second case, 
\[ \rho(m, n) \le (2s-1)(p^{n}-1)+p^{r} < s(p^{n+1}-1)+1 \le \rho(m, n+1)\]
as claimed. 
\end{proof}

By Proposition \ref{prop1}, $1 \le \rho(d, e) \le p^{d}$ for a block $B$ with defect group $D$. In \cite{KS}, it was proved that $LL(ZB)$ is bounded above by $\rho(d, e)$ for two cases (see Theorem \ref{thm2} below). This inequality is a refinement of Okuyama \cite{Ok1}, which states $LL(ZB) \le p^{d}$, and characterize a connection between the Loewy structure of $ZB$ and $p^{e}$ since $\rho(d, e)$ depends more on $e$ than on $d$. 

\begin{thm} \label{thm2} {\cite[Theorem 1, Proposition 10 and 13]{KS}} Let $B$ be a block of $FG$ with defect group $D$ of order $p^{d}$ and exponent $p^{e}$. Then the following hold:
\begin{enumerate}
\item If $D$ is abelian, then $LL(ZB) \le \rho(d, e)$. If moreover $D$ is normal in $G$, then equality occurs if and only if $B$ is nilpotent and $D$ has type $(e, \dots, e, f)$ for some $e \ge f$.
\item If $p=2$ and $D$ is non-abelian of order 8 or 16, then $LL(ZB) < \rho(d, e)$.
\end{enumerate}
\end{thm}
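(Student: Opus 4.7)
The plan is to establish (1) by reducing to the combinatorics of the group algebra $FD$, and (2) by a finite case analysis.

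For the upper bound in (1), I would combine K\"ulshammer's theorem (every $z \in J(ZB)$ satisfies $z^{p^{e}}=0$) with the source algebra picture for blocks with abelian defect. By Puig's results on source algebras (and K\"ulshammer's for the normal defect case), one relates $ZB$ to the invariants $(FD)^{E}$ under the inertial quotient $E=N_G(D,b_D)/C_G(D)$, so that
\[ LL(ZB) \le LL(FD). \]
The remaining step is a routine combinatorial optimization: for an abelian $p$-group of type $(e_1,\dots,e_s)$ with $\sum e_i=d$ and $\max e_i=e$,
\[ LL(FD)=1+\sum_{i=1}^{s}(p^{e_i}-1)\le \rho(d,e), \]
with equality exactly for the type $(e,\dots,e,r)$ where $d=qe+r$, $0\le r\le e-1$.

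For the equality statement with $D$ normal in $G$, the above inequalities become equalities, so $LL(FD)=\rho(d,e)$ forces the type of $D$ to be $(e,\dots,e,f)$ with $e \ge f$, and $LL(ZB)=LL(FD)$ forces the inertial quotient $E$ to act trivially on the extremal Loewy layers of $FD$. Since $E$ is a $p'$-group acting faithfully on $D/\Phi(D)$, the latter will force $E=1$, i.e.\ $B$ nilpotent, after which $B$ is Morita equivalent to $FD$ and the equality is realized.

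For (2), the argument is a case check. For $p=2$ and $|D|=8$ one has $D \in\{D_8, Q_8\}$ with $\rho(3,2)=5$, and for $|D|=16$ there are nine non-abelian isomorphism types with $\rho(d,e)\in\{7,9\}$ depending on $\exp D$. For each defect group I would invoke the known classification of $2$-blocks with that defect (Brauer--Olsson for tame blocks, Sambale for the remaining defect groups of order $16$), enumerate the Morita classes, and then either compute $LL(ZB)$ directly from the class sums generating $ZB$ or bound it via the Reynolds ideal together with subsection relations. The main obstacle throughout is handling the non-nilpotent blocks for the large-exponent cases ($D_{16}, SD_{16}, Q_{16}$), where the inertial quotient interacts non-trivially with fusion, and ruling out the extremal value $\rho(d,e)$ requires a careful subsection analysis rather than a purely combinatorial bound.
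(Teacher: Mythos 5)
Your overall strategy matches the paper's: both reduce part (1) to the purely combinatorial inequality $LL(FD)=p^{e_1}+\cdots+p^{e_r}-r+1\le\rho(d,e)$ (with equality exactly for type $(e,\dots,e,f)$, $f\le e$), take the block-theoretic input $LL(ZB)\le LL(FD)$ from K\"ulshammer--Sambale \cite[Theorem 1]{KS}, and dispose of part (2) by the case analysis of \cite[Propositions 10 and 13]{KS}. The paper actually carries out the induction for $LL(FD)\le\rho(d,e)$ via Proposition \ref{prop1}, where you assert it as routine; that is a presentational difference only.

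There is, however, a genuine gap in the one place where you replace a citation by your own argument, namely the ``only if'' direction of the equality statement for normal $D$. You claim that $LL(ZB)=LL(FD)$ forces the inertial quotient $E$ to act trivially on the \emph{extremal} Loewy layers of $FD$, and that this in turn forces $E$ to act trivially on $D/\Phi(D)$, hence $E=1$. The second implication fails: take $D=C_p\times C_p$ ($p$ odd) and $E=C_2$ interchanging the two factors. Then $E$ acts trivially on the one-dimensional top layer $J(FD)^{2p-2}$, spanned by $(x-1)^{p-1}(y-1)^{p-1}$, and of course on $FD/J(FD)$, yet it acts faithfully on $D/\Phi(D)$. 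So ``trivial on the extremal layers'' does not yield $E=1$. (The conclusion $LL(ZB)<LL(FD)$ is still true in this example, but for a subtler reason: every $E$-invariant element of $J(FD)$ has leading term proportional to $(x-1)+(y-1)$ modulo $J(FD)^2$, and $\bigl((x-1)+(y-1)\bigr)^{2p-2}=\binom{2p-2}{p-1}(x-1)^{p-1}(y-1)^{p-1}=0$ since $\binom{2p-2}{p-1}\equiv 0 \pmod p$.) To close this you would either need to argue with the first radical layer $J/J^2\cong F\otimes D/\Phi(D)$ and control how invariant leading terms multiply into the top layer, or simply cite \cite[Proposition 7]{KS}, as the paper does. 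The rest of your outline (part (2) via the classification of $2$-blocks with the nine non-abelian defect groups of order $8$ and $16$, with $\rho(3,2)=5$ and $\rho(4,e)\in\{7,9\}$) is consistent with the source the paper invokes.
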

\begin{proof}
Suppose that  $D$ is abelian of type $(e_{1}, \dots, e_{r})$, where $e=e_{1} \ge \dots \ge e_{r}$. Then we can prove  
\begin{align*}
LL(FD) = p^{e_{1}} + \cdots + p^{e_{r}}-r+1 & = p^{e}-1 + \{p^{e_{2}} + \dots + p^{e_{r}} - (r-1) +1\} \\
                                                                     & \le p^{e}-1+ \rho(d-e, e_{2}) \\
                                                                     & \le p^{e}-1 + \rho(d-e, e) = \rho(d, e),
\end{align*}
and $LL(FD)=\rho(d, e)$ if and only if $e=e_{2}= \dots =e_{r-1}$ by induction on $r$ and Proposition \ref{prop1}. Hence the claim in (1) follows from {\cite[Theorem 1 and Proposition 7]{KS}}. The other cases are due to {\cite[Proposition 10 and 13]{KS}}.
\end{proof}

In Theorem \ref{thm2} (1), $\rho(d, e)$ is also an upper bound for the Loewy length of $FD$ as mentioned in its proof. On the other hand, the extra--special $p$-group $p^{1+2}_{+}$ of order $p^{3}$ and exponent $p$ satisfies $\rho(3, 1)=3p-2$ and $LL(Fp^{1+2}_{+}) = 4p-3$. Thus the relations between these values are unclear in general.

In the following we prove the same inequality as in Theorem \ref{thm2} for other two cases. For this we need some notations and lemmas. 

Let $\mathcal{F}$ be the fusion system of $B$. Following {\cite[Lemma 1.36]{S1}}, we fix a complete set $\mathcal{S}$ of representatives for $G$-conjugacy classes of $B$-subsections such that $u \in D$ and $b$ has defect group $C_{D}(u)$ for any $(u, b) \in \mathcal{S}$. Moreover, we put 
\[ \mathcal{S}_{0} = \{ (u, b) \in \mathcal{S} \mid u \in Z(D) \} \ \ \ \text{(the set of all major subsections in $\mathcal{S}$)}. \]

For a $p$-subgroup $P$ of $G$, we denote by $Z_{\le P}(FG)$ the $F$-linear span of all the elements $C^{+}$ such that $C$ is a conjugacy class of $G$ and some defect group of $C$ is contained in a $G$-conjugate of $P$. Furthermore, we define
\[ Z_{< P}(FG) = \sum_{Q < P} Z_{\le Q}(FG). \]

For a block $B$ of $FG$ with block idempotent $\varepsilon$, we set
\begin{align*}
& Z_{\le P}(B) = Z_{\le P}(FG) \varepsilon, \\
&Z_{< P}(B) = Z_{< P}(FG) \varepsilon. 
\end{align*}
These are ideals of $ZB$. 

We here prove a lemma on controlled blocks. A block $B$ is said to be \textit{controlled} if all morphisms of $\mathcal{F}$ are generated by restrictions from ${\rm Aut}_{\mathcal{F}}(D)$. In this case $\mathcal{S} \backslash \mathcal{S}_{0}$ is not an empty set unless $D$ is abelian.

\begin{lem} \label{lem3} Let $B$ be a controlled block of $FG$ with non-abelian defect group $D$. Then $J(ZB)^{M} \subseteq Z_{\le Z(D)}(B)$, where
\[M = \max \{LL(Zb) \mid (u, b) \in \mathcal{S} \backslash \mathcal{S}_{0} \}. \]
\end{lem}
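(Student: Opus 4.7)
The plan is to use the Brauer homomorphisms indexed by non-major subsections to transfer the Loewy-length hypothesis on the various $Zb$ up to $ZB$. For each subsection $(u, b) \in \mathcal{S}$, the Brauer homomorphism $\mathrm{Br}_{\langle u \rangle}\colon Z(FG) \to Z(FC_G(u))$ composed with multiplication by the block idempotent $\varepsilon_b$ of $b$ defines an $F$-algebra homomorphism $\pi_{u, b}\colon ZB \to Zb$, acting on class sums by $\pi_{u, b}(C^+\varepsilon) = (C \cap C_G(u))^+ \varepsilon_b$. Since this is a morphism of finite-dimensional commutative $F$-algebras it maps $J(ZB)$ into $J(Zb)$; combined with $J(Zb)^M = 0$ (which is immediate from the definition of $M$) this gives
\[
J(ZB)^M \subseteq \bigcap_{(u,b) \in \mathcal{S} \setminus \mathcal{S}_0} \ker \pi_{u, b}.
\]

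The core step is the identification of this intersection with $Z_{\le Z(D)}(B)$. The inclusion $Z_{\le Z(D)}(B) \subseteq \bigcap \ker \pi_{u, b}$ uses the controlled hypothesis in an essential way: since $Z(D)$ is characteristic in $D$ and $\mathcal{F}$ is generated by $\mathrm{Aut}_\mathcal{F}(D)$, no element of $D \setminus Z(D)$ is $G$-conjugate to an element of $Z(D)$. Thus for $(u, b) \in \mathcal{S} \setminus \mathcal{S}_0$ and any class $C$ with defect group contained in a conjugate of $Z(D)$, the element $u$ cannot lie in any $G$-conjugate of a defect group of $C$, so $\mathrm{Br}_{\langle u \rangle}(C^+) = 0$. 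For the reverse inclusion I would invoke the Brauer parameterization of conjugacy classes in $B$ by pairs $((u, b), [y])$ with $(u, b) \in \mathcal{S}$ and $[y]$ a $p$-regular $C_G(u)$-class in $b$, together with the fact that the defect group of $[uy]^G$ coincides with that of $[y]^{C_G(u)}$ and always contains $u$. This forces every class $C$ outside $Z_{\le Z(D)}(B)$ to have its associated $u$ in $D \setminus Z(D)$, so the corresponding $\pi_{u, b}$ detects that class in $Zb$; a linear independence check then finishes the argument.

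The main obstacle will be the reverse inclusion, in particular verifying that distinct $G$-classes with defect groups outside $Z(D)^x$ yield linearly independent images under the maps $\pi_{u, b}$, so that no cancellation occurs in a putative element $z = \sum_C \alpha_C C^+ \varepsilon$ of the kernel intersection. This reduces to the observation that for a fixed $(u, b) \in \mathcal{S} \setminus \mathcal{S}_0$ the nonzero images $\pi_{u, b}(C^+ \varepsilon)$ for different $G$-classes $C$ are distinct $C_G(u)$-class sums inside $b$, since a $G$-class is recovered from the pair consisting of its $p$-part $u$ (up to $G$-conjugacy) and the $C_G(u)$-class of its $p$-regular part. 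Once the identification is in hand, combining it with the display above yields $J(ZB)^M \subseteq Z_{\le Z(D)}(B)$.
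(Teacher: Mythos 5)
Your overall framework is the same as the paper's (truncate at non-major subsections via Brauer homomorphisms), and your first step is sound: each $\pi_{u,b}$ is a homomorphism of commutative algebras, hence carries $J(ZB)$ into $J(Zb)$, and $J(Zb)^{M}=\{0\}$, so $J(ZB)^{M}\subseteq\bigcap\ker\pi_{u,b}$. The gap is in the inclusion $\bigcap_{(u,b)\in\mathcal{S}\setminus\mathcal{S}_{0}}\ker\pi_{u,b}\subseteq Z_{\le Z(D)}(B)$, which you base on the claim that a class $C$ whose defect group is not contained in a $G$-conjugate of $Z(D)$ must have the $p$-part $u$ of its elements in $D\setminus Z(D)$. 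That claim is false. A defect group of $C=[uy]^{G}$ is a Sylow $p$-subgroup of $C_{C_{G}(u)}(y)$; it contains $u$ but is in general far larger, so it can fail to lie in $Z(D)$ even when $u\in Z(D)$. Concretely, the block idempotent $\varepsilon$ itself is supported on $p$-regular classes (so $u=1\in Z(D)$) whose defect groups are $G$-conjugates of $D\not\le Z(D)$; these classes lie outside $Z_{\le Z(D)}(B)$ yet are invisible to your detection scheme if you only probe at the $p$-part. Your "linear independence check" therefore never gets started for exactly the classes that matter.

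The repair — which is the paper's proof — is to probe at an arbitrary element of the defect group rather than at the $p$-part. Given $z$ in the intersection of kernels and $g$ with $z(g)\neq 0$, a defect group $Q$ of $[g]^{G}$ satisfies $Q^{h}\subseteq D$ for some $h$ because $z\in ZB$; if $Q\not\le_{G}Z(D)$, choose $x\in Q^{h}\setminus Z(D)$. Since $B$ is controlled, $x$ is not $\mathcal{F}$-conjugate into $Z(D)$, so \emph{every} subsection $(x,\beta)$, with $\beta$ ranging over all Brauer correspondents of $B$ in $C_{G}(x)$, is conjugate to a member of $\mathcal{S}\setminus\mathcal{S}_{0}$; hence $\mathrm{Br}_{x}(z)\varepsilon_{\beta}=0$ for all such $\beta$, and summing gives $\mathrm{Br}_{x}(z)=\mathrm{Br}_{x}(z)\mathrm{Br}_{x}(\varepsilon)=0$. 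As $g^{h}\in C_{G}(x)$ and $\mathrm{Br}_{x}$ preserves coefficients on $C_{G}(x)$, this forces $z(g)=0$, a contradiction. Note two points your sketch leaves unaddressed: you must account for \emph{all} Brauer correspondents $\beta$ in $C_{G}(x)$, not just the single $b$ occurring in your chosen representative, since $\mathrm{Br}_{x}(z)\varepsilon_{b}=0$ alone does not yield $\mathrm{Br}_{x}(z)=0$; and your justification of the (unneeded) easy inclusion via "no element of $D\setminus Z(D)$ is $G$-conjugate to an element of $Z(D)$" overstates what controlledness gives — it controls $\mathcal{F}$-fusion, not arbitrary $G$-conjugacy of elements of $D$.
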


\begin{proof}
Take an element $z$ in $J(ZB)^{M}$, $g \in G$ and a defect group $Q$ of $g$. If $z(g) \neq 0$, then $Q^{h} \subseteq D$ for some $h \in G$. We now suppose $x \in Q^{h} \backslash Z(D)$ and $\sigma : ZFG \to ZFC_{G}(x)$ denotes the Brauer homomorphism. For all Brauer correspondent $\beta$ of $B$ in $C_{G}(x)$, $LL(Z\beta) \le M$ by our assumption. Since $g^{h} \in C_{G}(x)$, we have $z(g)=z(g^{h})=z^{\sigma}(g^{h})=0$, a contradiction. Therefore $Q \le_{G} Z(D)$ provided $z(g) \neq 0$ and thus $J(ZB)^{M} \subseteq Z_{\le Z(D)}(FG) \cap ZB = Z_{\le Z(D)}(B)$. 
\end{proof}

In the following we use the notations and terminologies on lower defect groups in Feit \cite{F1}. It is known that the number of irreducible Brauer characters associated to $B$ equals the sum of $m^{1}_{B}(P)$, where $P$ ranges over the representatives for conjugacy classes of $p$-subgroups of $G$, and $m^{1}_{B}(D)=1$. Furthermore, we define the \textit{hyperfocal subgroup} of $B$ as follows:
\[ \mathfrak{hyp}(B) = < f(x)x^{-1} \mid x \in Q \le D, f \in O^{p}({\rm Aut}_{\mathcal{F}}(Q)) >. \]

\begin{prop} \label{prop4} Let $B$ be a block of $FG$ with defect group $D$. If $\mathfrak{hyp}(B)$ is cyclic, then one of the following holds: 
\begin{enumerate}
\item $D$ is abelian and 
\[ LL(ZB) = LL(F[D/\mathfrak{hyp}(B)]) + \frac{|\mathfrak{hyp}(B)|-1}{|I(B)|}, \]
where $I(B)$ is the inertial quotient of $B$.
\item $D$ is non-abelian and
\[ LL(ZB) \le \max\{ LL(Zb) \mid (u, b) \in \mathcal{S} \backslash \mathcal{S}_{0} \}. \]
\end{enumerate}
\end{prop}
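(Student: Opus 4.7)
My plan is to split on whether $D$ is abelian and use Lemma \ref{lem3} in the non-abelian case.

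For part (1), assume $D$ is abelian and $H := \mathfrak{hyp}(B)$ is cyclic. The structural theory of blocks with cyclic hyperfocal subgroup (see \cite{S1}) tells us that $I(B)$ acts faithfully on $H$; since $H$ is cyclic this action is free on $H\setminus\{1\}$, giving exactly $(|H|-1)/|I(B)|$ orbits. I would then fix a complement $K$ with $D=H\times K$ and analyze the Loewy filtration of $ZB$: the $K$-direction produces a Loewy quotient isomorphic to $F[D/H]$, contributing $LL(F[D/H])$ layers, while the $H$-direction adds one layer per $I(B)$-orbit on $H\setminus\{1\}$, contributing the extra $(|H|-1)/|I(B)|$ term. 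The precise equality, rather than a mere inequality, will come from an orbit-by-orbit class-sum computation that shows each layer is nonzero.

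For part (2), assume $D$ is non-abelian. The first step is to verify that $B$ is controlled: cyclicity of $\mathfrak{hyp}(B)$ rules out proper $\mathcal{F}$-essential subgroups, since any such subgroup $Q<D$ would force the strongly $p$-embedded quotient $\mathrm{Out}_{\mathcal{F}}(Q)$ to inject a non-cyclic elementary abelian section into $\mathfrak{hyp}(B)$. With $B$ controlled, Lemma \ref{lem3} gives $J(ZB)^{M}\subseteq Z_{\le Z(D)}(B)$, where $M$ is the quantity on the right-hand side. To upgrade this containment to $J(ZB)^{M}=\{0\}$, I would use that any $z\in J(ZB)^{M}$ is supported only on conjugacy classes whose defect group lies in $Z(D)$, and then apply the Brauer homomorphisms at major subsections $(u,b)\in\mathcal{S}_{0}$ whose first components generate $Z(D)$; for each such $b$, the hyperfocal subgroup remains cyclic, so an inductive argument (on $|G|$ or on $|D|$) reduces the bound $LL(Zb)\le M$ to the non-major cases already accounted for in $M$, forcing $z=0$.

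The main obstacle I anticipate is precisely this final vanishing step in part (2): Lemma \ref{lem3} by itself only yields $LL(ZB)\le M+LL(Z_{\le Z(D)}(B))$, which is weaker than desired, so the argument must use the cyclicity of $\mathfrak{hyp}(B)$ decisively to kill the residual major-subsection contribution. A secondary difficulty is the initial fusion-theoretic reduction to controlled blocks in part (2), and in part (1) the extraction of an exact equality (not just an upper bound) for $LL(ZB)$ from the cyclic-hyperfocal structure, which requires verifying that each putative radical layer is indeed nonzero.
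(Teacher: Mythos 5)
Your overall split (abelian versus non-abelian, with Lemma~\ref{lem3} driving the non-abelian case) matches the paper, but both halves of your argument have genuine gaps at exactly the points where the real content lies.

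In part (1), the orbit count $(|\mathfrak{hyp}(B)|-1)/|I(B)|$ is the right heuristic, but a ``direct analysis of the Loewy filtration'' with an ``orbit-by-orbit class-sum computation'' is not a proof: $ZB$ is not visibly a crossed product or an invariant ring of $FD$, so there is no a priori handle on its radical layers. The paper gets the exact equality by a chain of structural reductions: writing $D=\mathfrak{hyp}(B)\times R$ with $R=C_{D}(I)$, invoking Watanabe's isomorphism $ZB\simeq Z\beta$ for a Brauer correspondent $\beta$ in $C_{G}(R)$, then the K\"ulshammer--Okuyama--Watanabe tensor decomposition of the source algebra of $\beta$ to obtain $Z\beta\simeq FR\otimes Z\bar{\beta}$, where $\bar{\beta}$ has \emph{cyclic} defect group $D/R\simeq\mathfrak{hyp}(B)$; the term $(|\mathfrak{hyp}(B)|-1)/|I(B)|$ then comes from the known Loewy length of the center of a block with cyclic defect group \cite{KKS}. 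Without some substitute for this Morita-theoretic machinery you cannot certify that every putative layer is nonzero, i.e.\ you would at best get an inequality.

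In part (2), the fatal problem is your final vanishing step. You correctly observe that Lemma~\ref{lem3} alone only puts $J(ZB)^{M}$ inside $Z_{\le Z(D)}(B)$, but your proposed remedy --- inducting through the major subsections $(u,b)\in\mathcal{S}_{0}$ to force $LL(Zb)\le M$ --- cannot work: a major subsection has $u\in Z(D)$, hence $b$ has defect group $C_{D}(u)=D$ itself, so there is no descent on $|D|$, and for $u\in Z(G)$ there is no descent on $|G|$ either; the argument is circular. The paper instead proves the much stronger statement $Z_{\le Z(D)}(B)=\{0\}$ directly, using lower defect group multiplicities: if it were nonzero, then $m^{1}_{b}(Q)\neq 0$ for some $(u,b)\in\mathcal{S}_{0}$ and some $Q\le_{G}Z(D)$ (Feit, Olsson); in the nilpotent case this contradicts $b$ having a unique irreducible Brauer character, and in the non-nilpotent case Watanabe's structure theory gives $D=\mathfrak{hyp}(B)\rtimes R$ with $Q$ conjugate to $C_{D}(I(b))$, forcing $R\le Z(D)$ and hence $D$ abelian --- a contradiction. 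This lower-defect-group input is the decisive use of the cyclic hyperfocal hypothesis, and nothing in your outline replaces it. (Your fusion-theoretic derivation that $B$ is controlled is also only sketched; the paper simply cites Watanabe's theorem, which is the safe route.)
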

\begin{proof}
Put $I=I(B)$ and $R=C_{D}(I)$. We first suppose $D$ is abelian. Then $\mathfrak{hyp}(B)=[D, I]$ and $D = \mathfrak{hyp}(B) \times R$. If $\beta$ denotes a Brauer correspondent of $B$ in $C_{G}(R)$, then it holds from {\cite[Theorem 2]{W2}} that $ZB \simeq Z\beta$. Let $\bar{\beta}$ be a block of $C_{G}(R)/R$ dominated by $\beta$. By {\cite[Theorem 7]{KOW}}, a source algebra of $\beta$ is isomorphic to a tensor product of $FR$ and a source algebra of $\bar{\beta}$. Since a block is Morita equivalent to its source algebra, we have $Z\beta \simeq FR \otimes Z\bar{\beta}$. Since $\bar{\beta}$ has defect group $D/R \simeq \mathfrak{hyp}(B)$ and inertial quotient $I(\bar{\beta}) \simeq I(B)$, 
\[ LL(ZB) = LL(FR) + LL(Z\bar{\beta}) - 1 = LL(F[D/\mathfrak{hyp}(B)]) + \frac{|\mathfrak{hyp}(B)|-1}{|I(B)|} \]
by {\cite[Corollary 2.8]{KKS}}. We next suppose $D$ is non-abelian. $B$ is a controlled block as mentioned in {\cite[Theorem 3]{W1}}. Thus we need only to prove $Z_{\le Z(D)}(B) = \{0\}$; otherwise $m^{\pi}_{B}(P) \neq 0$ for  a $p$-element $\pi \in G$ and a subgroup $P \le Z(D)$ by Lemma 10.8 in \cite{F1}. Furthermore, we can replace $\pi$ by an element in $Z(D)$ (see {\cite[Corollary 5.7]{Ol1}}). Hence $m^{1}_{b}(Q) \neq 0$ for some $(u, b) \in \mathcal{S}_{0}$ and $Q \le_{G} Z(D)$ by {\cite[Corollary 7.7]{Ol1}} (cf. {\cite[Theorem 10.10]{F1}}). We here remark that $b$ has defect group $D$, inertial quotient $I(b) \simeq C_{I}(u)$ and  cyclic hyperfocal subgroup (see {\cite[Lemma 6]{W1}}).  If $B$ is nilpotent (i.e. $|\mathfrak{hyp}(B)|=1$), then so is $b$. Since a nilpotent block has exactly one irreducible Brauer character, this contradicts $m^{1}_{b}(Q) \neq 0$. If $B$ is non-nilpotent, then $D = \mathfrak{hyp}(B) \rtimes R$ by {\cite[Lemma 4]{W1}}. In this case, $Q$ is a $C_{G}(u)$-conjugate of $C_{D}(I(b))$ and thus $R \le_{G} Z(D)$ (see {\cite[Lemma 10 or the proof of Theorem 1]{W1}}). Thereby $R \le Z(D)$ as $B$ is controlled, but this contradicts our assumption that $D$ is non-abelian. We have thus proved (2). 
\end{proof}

The main theorem in this section is the following.

\begin{thm} \label{thm5} Let $B$ be a block of $FG$ with defect group $D$ of order $p^{d}$ and exponent $p^{e}$. Then the following hold:
\begin{enumerate}
\item If $\mathfrak{hyp}(B)$ is cyclic, then $LL(ZB) \le \rho(d, e)$ with equality if and only if $B$ is nilpotent and $D$ is abelian of type $(e, \dots, e, f)$ for some $e \ge f$;
\item If $D$ is non-abelian metacyclic, then $LL(ZB) < \rho(d, e)$.
\end{enumerate}
\end{thm}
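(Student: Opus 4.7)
My plan is to prove both parts by induction on $|G|$, with Proposition \ref{prop4} as the main engine, and Part (2) deduced from Part (1).

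For Part (1), I split on whether $D$ is abelian. In the abelian case the inequality $LL(ZB) \le \rho(d, e)$ is immediate from Theorem \ref{thm2}(1), and I would break the equality discussion into two sub-cases. If $B$ is nilpotent, then $B$ is Morita equivalent to $FD$ and so $ZB \simeq FD$, whence $LL(ZB) = \rho(d, e)$ forces $D$ to have type $(e, \dots, e, f)$ by the computation of $LL(FD)$ inside the proof of Theorem \ref{thm2}(1). If $B$ is not nilpotent, so $\mathfrak{hyp}(B) \neq 1$, I would substitute the formula of Proposition \ref{prop4}(1) into that same rank induction and show the right-hand side drops strictly below $\rho(d, e)$. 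In the non-abelian case, Proposition \ref{prop4}(2) yields
\[
LL(ZB) \le \max\bigl\{LL(Zb) : (u, b) \in \mathcal{S} \setminus \mathcal{S}_{0}\bigr\};
\]
each block $b$ appearing in the maximum has defect group $C_{D}(u) < D$, and its fusion system is obtained by restricting the morphisms of $\mathcal{F}$ that fix $u$, so $\mathfrak{hyp}(b) \subseteq \mathfrak{hyp}(B) \cap C_{D}(u)$ remains cyclic. Induction then gives $LL(Zb) \le \rho(d', e')$ with $p^{d'} = |C_{D}(u)| \le p^{d-1}$ and $p^{e'} \le p^{e}$, and Proposition \ref{prop1} forces $\rho(d', e') \le \rho(d-1, e) < \rho(d, e)$. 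Hence $LL(ZB) < \rho(d, e)$ whenever $D$ is non-abelian, which both proves the inequality in that case and shows that equality in (1) can only occur when $D$ is abelian.

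For Part (2), I would reduce to Part (1) by establishing that $\mathfrak{hyp}(B)$ is cyclic whenever $D$ is non-abelian metacyclic. Once this is done, Part (1) gives $LL(ZB) \le \rho(d, e)$, and because $D$ is non-abelian the equality clause of Part (1) upgrades the bound to strict inequality. The cyclicity of $\mathfrak{hyp}(B)$ rests on the fact that the derived subgroup of a metacyclic $p$-group is cyclic (being contained in the cyclic normal subgroup witnessing metacyclicity), together with known structural results on saturated fusion systems supported on metacyclic $p$-groups (see, e.g., Sambale \cite{S1} and references therein); for $p$ odd the argument is essentially uniform, while for $p = 2$ it proceeds by a case analysis over the families of non-abelian metacyclic $2$-groups (dihedral, semi-dihedral, generalised quaternion, modular, and the remaining split and non-split metacyclic types).

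The main obstacle is the cyclicity statement in Part (2). Small orders are already covered by Theorem \ref{thm2}(2), and the subfamilies of $2$-groups with cyclic maximal subgroup can be dispatched via the bound from the introduction; beyond these, giving a completely uniform argument will demand careful bookkeeping of focal and hyperfocal data across the remaining metacyclic types, combined with Proposition \ref{prop4} applied inductively.
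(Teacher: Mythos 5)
Your treatment of Part (1) is essentially the paper's proof: abelian case via Theorem \ref{thm2} and the exact formula of Proposition \ref{prop4}(1) for the equality analysis, non-abelian case via Proposition \ref{prop4}(2) together with induction (the paper inducts on $|D|$ rather than $|G|$, but since $C_{D}(u)<D$ for non-major subsections either works) and the strict monotonicity $\rho(d-1,e)<\rho(d,e)$ from Proposition \ref{prop1}. That part is fine.

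Part (2) contains a genuine gap. Your central claim --- that $\mathfrak{hyp}(B)$ is cyclic whenever $D$ is non-abelian metacyclic --- is false for $p=2$, and the reason you offer for it (that $\mathfrak{hyp}(B)$ should be controlled by the cyclic derived subgroup, or by the cyclic normal subgroup witnessing metacyclicity) does not apply: the hyperfocal subgroup is generated by commutators $[Q,O^{p}(\mathrm{Aut}_{\mathcal{F}}(Q))]$ over \emph{all} subgroups $Q\le D$, and it is neither contained in $D'$ nor in any prescribed cyclic normal subgroup. Concretely, for the principal block of $SL(2,3)$ one has $D\simeq Q_{8}$ (metacyclic, non-abelian) and $\mathfrak{hyp}(B)=[Q_{8},C_{3}]=Q_{8}$, which is not even abelian; similarly for the principal block of $S_{4}$ with $D\simeq D_{8}$ the hyperfocal subgroup contains a Klein four subgroup. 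So the dihedral, semidihedral and quaternion cases with non-nilpotent fusion --- exactly the cases where something must be proved --- cannot be reached through Part (1) at all. The paper's route is different: reduce to $B$ non-nilpotent, then invoke Sambale's classification ({\cite[Theorem 8.1 and 8.8]{S1}}) to split into (i) $p=2$ and $D$ with cyclic maximal subgroup, i.e.\ $e=d-1$, which is settled directly by {\cite[Proposition 7]{KOS}} since $2^{e-1}+1<2^{d-1}+1=\rho(d,d-1)$, and (ii) $p$ odd and $D$ a split extension of two cyclic groups, where the hyperfocal subgroup genuinely is cyclic and Part (1) applies. You do mention the $e=d-1$ bound from the introduction as a way to ``dispatch'' some subfamilies, which is the correct fallback, but you present it as covering small or peripheral cases while your main mechanism (cyclicity of $\mathfrak{hyp}(B)$) is the one that fails precisely on the non-nilpotent $2$-blocks; to repair the argument you must make case (i) the primary argument for those groups and restrict the cyclicity claim to $p$ odd and to the nilpotent $2$-blocks (where $\mathfrak{hyp}(B)=1$ trivially).
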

\begin{proof}
We first prove (1). If $D$ is abelian, then the claim follows from Theorem \ref{thm2}, its proof and Proposition \ref{prop4}. If $D$ is non-abelian, then there exists $(u, b) \in \mathcal{S} \backslash \mathcal{S}_{0}$ such that $LL(ZB) \le LL(Zb)$. Since $b$ has cyclic hyperfocal subgroup and defect group $C_{D}(u) < D$, we obtain $LL(ZB) \le \rho(d-1, e) < \rho(d, e)$ by induction on $|D|$ and Proposition \ref{prop1}. 

In order to prove (2), we may assume $B$ is non-nilpotent. Then $p=2$ and $e=d-1$, or $p \neq 2$ and $D$ is a split extension of two cyclic groups ({\cite[Theorem 8.1 and 8.8]{S1}}). The first case is due to {\cite[Proposition 7]{KOS}}. In the second case, $B$ has cyclic hyperfocal subgroup (see the proof of {\cite[Theorem 8.8]{S1}}. Note that $\mathfrak{hyp}(B)$ is contained in the focal subgroup of $B$). Thus (2) follows from (1).
\end{proof}

Recently, Tasaka and Watanabe \cite{TW} proved that a block with non-abelian metacyclic defect group of odd order is isotypic to its Brauer correspondent in $N_{G}(D)$. It is well-known that isotypy between two blocks induces isomorphism between their centers. As $B$ has cyclic inertial quotient $I$ in this case, $ZB$ is isomorphic to $ZF[D \rtimes I]$ by K\"{u}lshammer \cite{K2}.

\section*{General bounds for non-abelian defect groups}
In this section we improve {\cite[Theorem 3]{KOS}}, which states
\begin{equation*}
 LL(ZB) < \begin{cases}
                     p^{d-1} & (p=2, 3) \\
                     4p^{d-2} & (p \ge 5)
                  \end{cases}
 \end{equation*}
for a block $B$ with non-abelian defect group $D$. 

As in the preceding section, let $\mathcal{S}$ be a complete set of representatives for $G$-conjugacy classes of $B$-subsections. For each $(u, b) \in \mathcal{S}$, $b$ dominates a block $\bar{b}$ of $C_{G}(u)/\left<u\right>$ with defect group $C_{D}(u)/\left<u\right>$. 

We now prepare two lemmas. The first one refines the second part of Proposition 2 in \cite{KS}. 

\begin{lem} \label{lem6} Let $B$ be a block of $FG$ with non-trivial defect group $D$. Then the following hold: 
\begin{enumerate}
\item $LL(ZB/RB) \le \max \{ LL(Zb/Rb) \mid (u, b) \in \mathcal{S}, o(u)=p \}$;
\item For any $(u, b) \in \mathcal{S}$, $LL(Zb) \le p^{n} \times LL(Z\bar{b})$ where $p^{n}=o(u)$.
\end{enumerate}
\end{lem}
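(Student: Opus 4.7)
My plan addresses the two parts in turn; the first is conceptually deeper while the second admits a short direct computation.

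For part (1), I would combine the Brauer homomorphism with a Reynolds-ideal detection principle restricted to order-$p$ subsections. For each $(u,b) \in \mathcal{S}$ with $o(u) = p$, composing the Brauer homomorphism $\sigma_u : ZFG \to ZFC_G(u)$ with projection onto $Zb$ yields a unital $F$-algebra homomorphism $\tau_{u,b} : ZB \to Zb$; since both algebras are commutative and finite-dimensional, $\tau_{u,b}$ sends $J(ZB)$ into $J(Zb)$. Setting $L = \max\{LL(Zb/Rb) : (u,b) \in \mathcal{S},\, o(u) = p\}$, any $z \in J(ZB)^L$ satisfies $\tau_{u,b}(z) \in J(Zb)^L \subseteq Rb$ for every such subsection. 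It then remains to deduce $z \in RB$ from this family of Reynolds-ideal conditions.

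This detection step is precisely the refinement over the second part of \cite[Proposition~2]{KS} (which uses all nontrivial subsections), and is the main obstacle. My plan for it is to reduce an element $u$ of order $p^n \ge p^2$ to its order-$p$ power $v := u^{p^{n-1}}$ via the inclusion $C_G(u) \subseteq C_G(v)$ and the transitivity of the Brauer homomorphism, thereby descending the \cite{KS} detection (at all nontrivial subsections) to order-$p$ subsections alone. The dominated-block surjection $Zb \twoheadrightarrow Z\bar{b}$ from part~(2) should help make this descent effective, since it relates higher-order subsection data to lower-order data.

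For part (2) the argument is short. Because $u$ is central in $C_G(u)$, the element $u-1$ lies in $ZFC_G(u)$ and satisfies $(u-1)^{p^n} = u^{p^n}-1 = 0$. The canonical surjection $FC_G(u) \twoheadrightarrow F[C_G(u)/\langle u\rangle]$ has kernel the two-sided ideal $(u-1)\cdot FC_G(u)$; restricting to $b$ yields a surjection $b \twoheadrightarrow \bar{b}$ with kernel $(u-1)\cdot b$, and passing to centers gives a surjective $F$-algebra homomorphism $\phi : Zb \twoheadrightarrow Z\bar{b}$ with $\ker \phi \subseteq (u-1)\cdot b$. Setting $L = LL(Z\bar{b})$, any $y \in J(Zb)$ satisfies $\phi(y^L) \in J(Z\bar{b})^L = 0$, so $y^L = (u-1)c$ for some $c \in b$. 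Since $u$ is central and hence commutes with $c$,
\[
y^{p^n L} \;=\; (y^L)^{p^n} \;=\; (u-1)^{p^n}\,c^{p^n} \;=\; 0,
\]
which gives $J(Zb)^{p^n L}=0$ and hence $LL(Zb) \le p^n \cdot LL(Z\bar{b})$.
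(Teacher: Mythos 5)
Part (2) of your proposal is correct and coincides with the paper's argument: pass to the dominated block via $\mu : FC_{G}(u) \to F[C_{G}(u)/\langle u\rangle]$, note that $\ker\mu = (1-u)FC_{G}(u)$ with $1-u$ central and $(1-u)^{p^{n}}=0$, and conclude $J(Zb)^{p^{n}\cdot LL(Z\bar{b})}=\{0\}$. The only cosmetic difference is that the paper works with the ideal power $(\ker\mu)^{p^{n}}$ rather than with a single element $y^{L}=(u-1)c$; both are fine.

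Part (1), however, contains a genuine gap: you reduce everything to the ``detection step'' --- deducing $z\in RB$ from $\tau_{u,b}(z)\in Rb$ for all order-$p$ subsections --- and then explicitly leave it as a plan rather than a proof. This step is the entire content of the refinement, and the paper closes it by a different mechanism than the descent you sketch. The key fact you are missing is the concrete description of the Reynolds ideal: $RFG$ has an $F$-basis consisting of the $p'$-section sums $S^{+}$, so $z\in RB$ is equivalent to the family of coefficient identities $z(xy)=z(y)$ for every non-trivial $p$-element $x$ and every $p'$-element $y\in C_{G}(x)$. Each such identity involves only \emph{one} $p$-element $x$, and one then sets $v=x^{p^{r-1}}$ (of order $p$), notes $x,y,xy\in C_{G}(v)$, and applies the Brauer homomorphism $\sigma:ZFG\to ZFC_{G}(v)$: either $v\notin_{G} D$, in which case $\varepsilon^{\sigma}=0$ and both coefficients vanish, or the hypothesis applied to the order-$p$ subsections $(v,\beta)$ gives $z^{\sigma}\in RFC_{G}(v)$, whence $z(xy)=z^{\sigma}(xy)=z^{\sigma}(y)=z(y)$ because $\sigma$ preserves coefficients on $C_{G}(v)$ and $xy$, $y$ lie in the same $p'$-section of $C_{G}(v)$. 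Your instinct to replace $u$ by its power of order $p$ is exactly right, but without the $p'$-section basis of $RFG$ the family of conditions $\tau_{u,b}(z)\in Rb$ does not visibly recombine into the global statement $z\in RB$; in particular, the proposed descent of the detection of \cite{KS} from all subsections to order-$p$ ones would require showing that the Brauer homomorphism $ZFC_{G}(v)\to ZFC_{G}(u)$ carries Reynolds ideals into Reynolds ideals, which you do not address, and the dominated-block surjection from part (2) plays no role here. As written, part (1) is an outline of the difficulty, not a proof of it.
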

\begin{proof}
We first prove (1). Let $\varepsilon$ be the block idempotent of $B$, $m$ the right hand side of the inequality and fix an element $z$ in $J(ZB)^{m}$. Since $RFG$ has an $F$-basis consisting of the sums $S^{+}$, where $S$ runs through the $p'$-sections of $G$, and $RB=RFG\varepsilon$, we need only to prove $z(xy)=z(y)$ for all non-trivial $p$-element $x$ in $G$ and $p'$-element $y$ in $C_{G}(x)$. We here put $v=x^{p^{r-1}}$ where $p^{r}=o(x)$ and $\sigma : ZFG \to ZFC_{G}(v)$ denotes the Brauer homomorphism. Remark that $x, y, xy \in C_{G}(v)$. If $v \notin_{G} D$, then $\varepsilon^{\sigma}=0$ and thus $z^{\sigma}=0$. Hence $z(xy)=z^{\sigma}(xy)=0$ and $z(y)=z^{\sigma}(y)=0$. In the following, we assume $v \in_{G} D$. By our assumption, $LL(Z\beta/R\beta) \le m$ for all Brauer correspondent $\beta$ of $B$ in $C_{G}(v)$. Thus $z^{\sigma} \in RFC_{G}(v)$ and $z(xy)=z^{\sigma}(xy)=z^{\sigma}(y)=z(y)$ as required.

We next prove (2). Let $\mu : FC_{G}(u) \to F[C_{G}(u)/\left<u\right>]$ be the natural epimorphism and $m=LL(Z\bar{b})$. Then $\mu(J(Zb)^{m}) \subseteq J(Z\bar{b})^{m} = \{0\}$ and hence $J(Zb)^{mp^{n}} \subseteq (\ker \mu)^{p^{n}} = \{ (1-u)FC_{G}(u) \}^{p^{n}} = \{0\}$. Thus we have $LL(Zb) \le mp^{n}$. 
\end{proof}

The next lemma was proved originally by Passman \cite{P1}.

\begin{lem} \label{lem7} {\cite[Lemma 2]{KOS}} Let $B$ be a block of $FG$ with defect group $D$ and $P$ a subgroup of $Z(D)$. Then $Z_{\le P}(B) \cdot J(ZB)^{L} \subseteq Z_{< P}(B)$, where $L=LL(FP)$.
\end{lem}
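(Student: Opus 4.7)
The plan is to use the Brauer homomorphism $\sigma_P : Z(FG) \to Z(FC_G(P))$ to reduce to a computation inside the Brauer correspondents of $B$ at $P$. A Sylow-type check shows that for any conjugacy class $C$ of $G$ with defect group $Q \le_G P$, one has $\sigma_P(C^+) \ne 0$ if and only if $Q =_G P$; hence $\sigma_P$ annihilates $Z_{<P}(FG)$ and induces an injection on $Z_{\le P}(FG)/Z_{<P}(FG)$. So it will suffice to show that $\sigma_P(ax) = 0$ for $a \in Z_{\le P}(B)$ and $x \in J(ZB)^L$.

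Since $P \le Z(D)$, each Brauer correspondent $\beta$ of $B$ in $FC_G(P)$ has $P \le Z(D_\beta)$. By {\cite[Theorem 7]{KOW}} the source algebra of $\beta$ is isomorphic to $FP \otimes_F$ (source algebra of $\bar\beta$), where $\bar\beta$ is the block of $F[C_G(P)/P]$ dominated by $\beta$. Passing to centers yields $Z\beta \cong FP \otimes Z\bar\beta$ and
\[
J(Z\beta)^L = \sum_{k+l=L} J(FP)^k \otimes J(Z\bar\beta)^l.
\]

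The key intermediate claim I would establish is that, under this isomorphism,
\[
\sigma_P(Z_{\le P}(B)) \cdot \varepsilon_\beta \subseteq FP \otimes R\bar\beta.
\]
Informally, one writes $\sigma_P(C^+) = \sum_D u_D T_D^+$ where $u_D \in P$ and each $T_D$ is a $p'$-class of $C_G(P)$ whose image $\bar T_D$ has defect zero in $C_G(P)/P$, so $\bar T_D^+ \in R\bar\beta$. Granting the claim, the product $(FP \otimes R\bar\beta) \cdot J(Z\beta)^L$ decomposes as $\sum_{k+l=L} J(FP)^k \otimes R\bar\beta \cdot J(Z\bar\beta)^l$: for $l \ge 1$ the factor $R\bar\beta \cdot J(Z\bar\beta) = 0$ since the Reynolds ideal annihilates the Jacobson radical, while for $l = 0$ we have $k = L$ and $J(FP)^L = 0$. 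Every summand vanishes, so $\sigma_P(ax) = 0$, and the lemma follows.

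The main obstacle is the second displayed containment. The natural projection $\pi : FC_G(P) \to F[C_G(P)/P]$ only shows $\pi(\sigma_P(C^+)) \in R\bar\beta$, which is strictly weaker than sitting inside $FP \otimes R\bar\beta$; lifting this through the source-algebra identification is delicate, since the isomorphism $\beta \cong FP \otimes \bar\beta$ does not respect the group-element basis in an obvious way. In practice, I would fall back on Passman's original argument in \cite{P1}, which sidesteps the Puig machinery and works directly with the ideal structure of $Z_{\le P}(B)$ inside $ZB$.
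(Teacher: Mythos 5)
The paper itself offers no proof of this lemma---it is quoted verbatim from \cite[Lemma 2]{KOS} with the remark that it goes back to Passman \cite{P1}---so your attempt can only be judged on its own terms, and on those terms it has a genuine gap, which you yourself flag: the containment $\sigma_P(Z_{\le P}(B))\varepsilon_\beta \subseteq FP \otimes R\bar{\beta}$ is never established. This is not a removable formality. The isomorphism $Z\beta \cong FP \otimes Z\bar{\beta}$ obtained from \cite[Theorem 7]{KOW} is produced by source-algebra and idempotent-lifting arguments and carries no canonical identification of the class-sum basis of $Z\beta$ with decomposable tensors; knowing that $\pi(\sigma_P(C^+))$ lands in $R\bar{\beta}$ under the quotient map $\pi : FC_G(P) \to F[C_G(P)/P]$ does not locate $\sigma_P(C^+)\varepsilon_\beta$ inside $FP \otimes R\bar{\beta}$, since $\ker\pi \cap Z\beta$ need not correspond to $J(FP)\otimes Z\bar{\beta}$ under an arbitrary such isomorphism. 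Ending with ``I would fall back on Passman's original argument'' concedes that the proof is not complete. (Your reduction via $\sigma_P$ is fine: the Sylow-type computation showing $\ker\sigma_P \cap Z_{\le P}(FG) = Z_{<P}(FG)$ is correct, as is the observation that $\sigma_P(J(ZB))\varepsilon_\beta$ consists of nilpotent central elements and hence lies in $J(Z\beta)$.)

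The good news is that the gap closes if you drop the detour through \cite{KOW} entirely and work with $\pi$ alone, in the spirit of \cite{P1}. Set $H = C_G(P)$, so $P \le Z(H)$. Every $H$-class $T$ with defect group $P$ has the form $T = uT'$ with $u = t_p \in P$ and $T'$ the ($p'$-)class of $t_{p'}$, also of defect group $P$; the assignment $(u,T') \mapsto uT'$ is a bijection, so $Z_{\le P}(FH)$ is a \emph{free} $FP$-module on the $p'$-class sums $T'^{+}$, and consequently $\ker\pi \cap Z_{\le P}(FH) = J(FP)\,Z_{\le P}(FH)$. Each $\pi(T'^{+}) = \bar{T'}^{+}$ is a defect-zero class sum of $H/P$; a defect-zero class is an entire $p'$-section, so $\bar{T'}^{+} \in R(F[H/P])$ and hence $\pi\bigl(Z_{\le P}(FH)\cdot J(ZFH)\bigr) \subseteq R(F[H/P])\cdot J(ZF[H/P]) = \{0\}$. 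Therefore $Z_{\le P}(FH)\cdot J(ZFH) \subseteq J(FP)\,Z_{\le P}(FH)$, and since $J(FP)$ is central this iterates to $Z_{\le P}(FH)\cdot J(ZFH)^{L} \subseteq J(FP)^{L}Z_{\le P}(FH) = \{0\}$. Applying this to $\sigma_P(a)\sigma_P(x)$ gives $\sigma_P(ax)=0$, and your first step finishes the argument. In other words, your skeleton (Brauer homomorphism, Reynolds ideal killing the radical, $J(FP)^L = 0$) is the right one; the source-algebra tensor decomposition is both the source of the obstruction and unnecessary.
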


By using the lemmas above we prove the following (cf. {\cite[Theorem 3]{KOS}}).

\begin{thm} \label{thm8} Let $B$ be a block of $FG$ with non-abelian defect group $D$ of order $p^{d}$. Then the following hold:
\begin{enumerate}
\item $LL(ZB) < 3p^{d-2}$ unless $D \simeq p^{1+2}_{+}, \ p \ge 5$; 
\item If $D$ has the form
\[ W(d) = < x, y, z \mid x^{p^{d-2}}=y^{p}=z^{p}=[x, y]=[x, z]=1, [y, z]=x^{p^{d-3}} > \]
then
\begin{equation*}
LL(ZB) \le \begin{cases}
                      4p-1 & (d=3) \\
                      2p^{2}+2p & (d=4) \\
                      p^{d-2}+2p(p-1)+1 & (d \ge 5);
                  \end{cases}    
\end{equation*}
\item If $B$ is controlled and $D \not\simeq W(d)$, then $LL(ZB) \le p^{d-2} + 3p^{d-3}+p-1$. 
\end{enumerate}
\end{thm}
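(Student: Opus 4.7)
I plan to prove all three parts together by induction on $d = \log_p |D|$. The common engine is Lemma \ref{lem6}: part (1) reduces $LL(ZB)$ (up to the $+1$ from the Reynolds ideal, since $J(ZB) = ZB \cap J(B)$ annihilates $RB$) to $\max\{LL(Zb/Rb) : (u,b) \in \mathcal{S},\ o(u) = p\}$, and part (2) bounds each $LL(Zb)$ by $p \cdot LL(Z\bar b)$, where $\bar b$ is the block of $C_G(u)/\langle u\rangle$ dominated by $b$ with defect group $\bar D = C_D(u)/\langle u\rangle$ of order at most $p^{d-1}$. Lemmas \ref{lem3} and \ref{lem7} supply a complementary descent through the filtration $Z_{\leq P}(B)$.

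For Part (1), I would split on whether $u \in Z(D)$. When $u \notin Z(D)$ one has $|\bar D| \leq p^{d-2}$, so Okuyama gives $LL(Z\bar b) \leq p^{d-2}$ and hence $LL(Zb) \leq p^{d-1} < 3p^{d-2}$. When $u \in Z(D)$, $|\bar D| = p^{d-1}$; if $\bar D$ is abelian (necessarily non-cyclic since $D$ is non-abelian, hence of exponent at most $p^{d-2}$) Theorem \ref{thm2}(1) applies, while if $\bar D$ is non-abelian the induction hypothesis (Parts (1)--(3) at $d-1$) supplies the bound. Assembling these estimates across all subsections yields $LL(ZB) < 3p^{d-2}$, with the only genuine exception tracing back to $D \simeq p^{1+2}_+$ with $p \geq 5$.

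For Part (2), the explicit structure of $W(d)$ is decisive: $Z(W(d)) = \langle x\rangle$ is cyclic of order $p^{d-2}$ and $|W(d)'| = p$, so the unique $G$-class of central subsections with $o(u) = p$ has representative $u = x^{p^{d-3}}$ and $\bar D$ abelian of type $(p^{d-3}, p, p)$. Applying Theorem \ref{thm2}(1) to $\bar b$ and combining with Lemma \ref{lem6}(2) together with the non-central subsection bounds produces the three stated cases, with $d = 3$ and $d = 4$ verified as base cases (noting $W(3) = p^{1+2}_+$ for odd $p$). For Part (3), Lemma \ref{lem3} applies since $B$ is controlled with non-abelian $D$, giving $J(ZB)^M \subseteq Z_{\leq Z(D)}(B)$ with $M = \max\{LL(Zb) : (u,b) \in \mathcal{S}\setminus\mathcal{S}_0\}$; each such $LL(Zb)$ is bounded by $p^{d-1}$ via Okuyama, or more sharply by the induction hypothesis. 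Iterating Lemma \ref{lem7} along a chain of subgroups of $Z(D)$ descends from $Z_{\leq Z(D)}(B)$ to $RB$ in $LL(FZ(D))$ steps, and the hypothesis $D \not\simeq W(d)$ provides the structural restriction on $Z(D)$ required to reach the stated sharper bound.

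The main obstacle is the central subsection case in Part (1): the crude estimate $p \cdot \rho(d-1, \exp \bar D)$ obtained from Theorem \ref{thm2}(1) can easily exceed $3p^{d-2}$, as can the bound arising when the inductive invocation for $\bar b$ lands in the exceptional $p^{1+2}_+$ case. Circumventing this requires preferring the sharper estimates from Parts (2) or (3) of the induction hypothesis over Theorem \ref{thm2} whenever $\bar D \simeq W(d-1)$ or $\bar b$ is controlled, together with a careful classification of the class-$2$ groups $D$ with $|D'| = p$ that produce abelian $\bar D$ of large exponent. Pinning down exactly when the bound degrades, and verifying that this failure occurs only for $D \simeq p^{1+2}_+$ with $p \geq 5$, is the central combinatorial task.
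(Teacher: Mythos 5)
Your toolkit is the right one, and your plans for parts (2) ($d\ge 5$) and (3) do track the paper's argument (Lemma \ref{lem6} plus Theorem \ref{thm2}(1) for the former, Lemma \ref{lem3} plus Lemma \ref{lem7} for the latter). But there are concrete gaps. The most immediate is arithmetic: in your treatment of part (1), for a non-central subsection you bound $LL(Zb)\le p\cdot LL(Z\bar b)\le p\cdot p^{d-2}=p^{d-1}$ and assert $p^{d-1}<3p^{d-2}$; this holds only for $p=2$, and fails (with room to spare) for every odd $p$. The same over-coarse estimate reappears in part (3), where you offer ``$LL(Zb)\le p^{d-1}$ via Okuyama'' for $M$; with that value of $M$ you cannot reach $p^{d-2}+3p^{d-3}+p-1$. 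The paper instead gets $M\le p^{d-2}+p-1$ by first reducing to $p\ne 2$ and $e\le d-2$ (via {\cite[Proposition 7]{KOS}}) and then using the abelian bound $LL(Zb)\le LL(FC_D(u))\le\rho(d-1,d-2)$ --- a reduction your sketch omits. More fundamentally, the paper does \emph{not} prove part (1) by a fresh induction: it derives (1) as a corollary of part (2) together with the already-established {\cite[Theorem 3]{KOS}}. What you call ``the central combinatorial task'' --- verifying that the bound degrades only for $D\simeq p^{1+2}_+$, $p\ge 5$ --- is precisely the content you would have to supply, and your proposal leaves it open; as written, part (1) is not proved.

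A second, independent issue is the case $D\simeq W(4)$ in part (2). Your route through Lemma \ref{lem6} gives, for a major subsection with $o(u)=p$, a defect group $\bar D\simeq C_p\times C_p\times C_p$ for $\bar b$, hence $LL(Zb)\le p(3p-2)=3p^2-2p$, which exceeds the stated bound $2p^2+2p$ for $p\ge 5$. The paper handles $W(4)$ differently: it first shows $B$ is automatically controlled (no $\mathcal F$-essential subgroups, by the rank and $|D:Z(D)|$ restrictions and Alperin's fusion theorem), and then runs the Lemma \ref{lem3}/Lemma \ref{lem7} descent through the cyclic chain in $Z(D)\simeq C_{p^2}$, yielding $M+N\le(p^2+p-1)+(p^2+p+1)=2p^2+2p$. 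Without this switch of method at $d=4$ your induction base is too weak to support the later cases.
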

\begin{proof}
(1) is a corollary to (2) and {\cite[Theorem 3]{KOS}}. We first suppose $D \simeq W(d), d \ge 5$. By Lemma \ref{lem6} (1), there exists $(u, b) \in \mathcal{S}$ such that $o(u)=p$ and $LL(ZB/RB) \le LL(Zb/Rb)$. If $u \notin Z(D)$, then $b$ has defect group $C_{D}(u) \simeq C_{p^{d-2}} \times C_{p}$. Thus we obtain from {\cite[Theorem 1]{KS}} that
\begin{align*}
 LL(ZB) \le LL(ZB/RB) + 1& \le LL(Zb/Rb)+1 \\
                                         & \le p^{d-2}+p-1 < p^{d-2}+2p(p-1)+1 
 \end{align*}
as $RB \cdot J(ZB)=\{0\}$. If $u \in Z(D)$, then $\bar{b}$ has defect group $D/\left<u\right> \simeq C_{p^{d-3}} \times C_{p} \times C_{p}$ and thus 
\begin{align*}
 LL(ZB) \le LL(Zb/Rb)+1 & \le LL(Zb)+1 \\
                                        & \le p \times LL(Z\bar{b}) +1 \\
                                        & \le p \times (p^{d-3}+2p-2)+1 = p^{d-2}+2p(p-1)+1
\end{align*}
by Lemma \ref{lem6} (2). This completes the proof of the last case in (2).

We next consider a block with defect group $D \simeq W(4)$. As $D$ has $p$-rank $2$ and $|D : Z(D)|=p^{2}$, all proper self-centralizing subgroups of $D$ are abelian of rank at most $2$. Hence $D$ has no $\mathcal{F}$-essential subgroup by {\cite[Proposition 6.11]{S1}} and $B$ is controlled by Alperin's fusion theorem. Therefore we can apply Lemma \ref{lem3} for the remaining cases. In particular, we may assume $p \neq 2$ and $e \le d-2$ in (3) by {\cite[Proposition 7]{KOS}}. By Lemma \ref{lem3}, there exists $(u, b) \in \mathcal{S} \backslash \mathcal{S}_{0}$ such that  $J(ZB)^{M} \subseteq Z_{\le Z(D)}(B)$, where $M = LL(Zb) \le p^{d-2}+p-1$ (remark that $b$ has defect group $C_{D}(u) < D$). If $D \not\simeq W(d)$, then $|Z(D)| \le p^{d-3}$ or $Z(D)$ is non-cyclic of order $p^{d-2}$ (see {\cite[Lemma 9]{KS}}). Thus it follows from Lemma \ref{lem7} that $Z_{\le Z(D)}(B) \cdot J(ZB)^{N} = \{0\}$, where
\begin{equation*}
N = \begin{cases}
        p^{2}+p+1 & (D \simeq W(4)) \\
        p^{d-3}+p-1+p^{d-3} + p^{d-4}+ \cdots + p+1 & (D \not\simeq W(d))
       \end{cases}
\end{equation*}
Hence we deduce the second case in (2) and (3) since $J(ZB)^{M+N} \subseteq Z_{\le Z(D)}(B) \cdot J(ZB)^{N} = \{0\}$ and $LL(ZB) \le M+N$. 
\end{proof}

We calculate one more case by the same way to Theorem \ref{thm8} (3). 

\begin{thm} Let $B$ be a controlled block of $FG$ with non-abelian defect group $D$ of order $p^{d}$ and exponent $p^{e}$. If $e \le d-3$, then $LL(ZB) < 6p^{d-3}$.
\end{thm}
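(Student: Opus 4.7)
The plan is to mimic the argument used for Theorem~\ref{thm8}(3): combine Lemma~\ref{lem3} (to move $J(ZB)^{M}$ into $Z_{\le Z(D)}(B)$) with iterated applications of Lemma~\ref{lem7} (to annihilate $Z_{\le Z(D)}(B) \cdot J(ZB)^{N}$), so that $LL(ZB) \le M+N$, and then arrange $M+N < 6p^{d-3}$.

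First I would apply Lemma~\ref{lem3}: since $B$ is controlled and $D$ is non-abelian, some $(u,b) \in \mathcal{S} \setminus \mathcal{S}_{0}$ satisfies $J(ZB)^{M} \subseteq Z_{\le Z(D)}(B)$ with $M = LL(Zb)$. The defect group $C_{D}(u)$ of $b$ has $|C_{D}(u)| \le p^{d-1}$ and exponent at most $p^{e} \le p^{d-3}$. If $C_{D}(u)$ is abelian, Theorem~\ref{thm2}(1) and Proposition~\ref{prop1} give $M \le \rho(d-1,e) \le \rho(d-1,d-3)$; if $C_{D}(u)$ is non-abelian, Theorem~\ref{thm8}(1) gives $M < 3p^{d-3}$. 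The only exception is $C_{D}(u) \simeq p^{1+2}_{+}$ with $p \ge 5$, in which case $|C_{D}(u)| = p^{3}$ and Theorem~\ref{thm8}(2) yields $M \le 4p-1$; this is dominated by $3p^{d-3}$ for $d \ge 5$, while for $d = 4$ the exception is ruled out altogether, because otherwise some $g \in D \setminus C_{D}(u)$ would induce a nontrivial automorphism of the $D$-normal cyclic group $Z(C_{D}(u)) = \langle u \rangle$, giving a nontrivial map from a $p$-group of exponent $p$ into $\operatorname{Aut}(C_{p}) \simeq C_{p-1}$, impossible since $\gcd(p,p-1)=1$.

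Next I would bound $N$ by iterating Lemma~\ref{lem7}. Since $D$ is non-abelian, $|Z(D)| \le p^{d-2}$, and since $Z(D) \le D$ its exponent is at most $p^{e} \le p^{d-3}$. Writing $Z(D)$ in abelian type $(e_{1},\ldots,e_{r})$ with $e_{i} \le d-3$ and $\sum e_{i} \le d-2$, the formula from the proof of Theorem~\ref{thm2} gives $LL(FZ(D)) \le p^{d-3}+p-1$, attained at $Z(D) \simeq C_{p^{d-3}} \times C_{p}$. Setting $L_{k} = \max\{LL(FQ) : Q \le Z(D),\, |Q| = p^{k}\}$ for $0 \le k \le d-2$, one has $L_{k} \le p^{k}$ for $k \le d-3$ and $L_{d-2} \le p^{d-3}+p-1$. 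Applying Lemma~\ref{lem7} level by level to the ideals $\mathcal{I}_{k} := \sum_{Q \le Z(D),\,|Q|=p^{k}} Z_{\le Q}(B)$ yields $\mathcal{I}_{k} \cdot J(ZB)^{L_{k}} \subseteq \mathcal{I}_{k-1}$, and iterating down to $\mathcal{I}_{-1} = \{0\}$ gives
\[
Z_{\le Z(D)}(B) \cdot J(ZB)^{N} = \{0\}, \qquad N = \sum_{k=0}^{d-2} L_{k} \le (p^{d-3}+p-1) + \tfrac{p^{d-2}-1}{p-1}.
\]

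Combining, $LL(ZB) \le M+N$. For $p = 2$ this gives $N = 3p^{d-3}$ exactly and $M < 3p^{d-3}$, so $M+N < 6p^{d-3}$; for odd $p$ one has $N < \tfrac{2p-1}{p-1}p^{d-3}+p$, and the inequality follows similarly, using the sharper bound $M \le 3p-2$ from Theorem~\ref{thm2}(1) in the borderline case $d=4$. The main obstacles I expect are handling the iteration of Lemma~\ref{lem7} cleanly --- which forces the use of $L_{k}$ rather than a single chain, since $Z(D)$ can have several maximal subgroups of different Loewy lengths --- and carefully excluding the $p^{1+2}_{+}$ exception when $d = 4$; the tightest arithmetic case is $p = 2$ and $d = 6$, where the final inequality holds with slack of only one.
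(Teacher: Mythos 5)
Your proposal is correct and follows essentially the same route as the paper: apply Lemma~\ref{lem3} to get $J(ZB)^{M}\subseteq Z_{\le Z(D)}(B)$ with $M=LL(Zb)<3p^{d-3}$ for a non-major subsection, then iterate Lemma~\ref{lem7} down through $Z(D)$ (whose order and exponent constraints give $N\le p^{d-3}+p-1+\frac{p^{d-2}-1}{p-1}$) to conclude $LL(ZB)\le M+N<6p^{d-3}$. The only cosmetic differences are that the paper disposes of $p=2$ at the outset and simply notes $C_{D}(u)\not\simeq p^{1+2}_{+}$ (which follows since $Z(D)$ and $u$ both lie in the order-$p$ center of $C_{D}(u)$), whereas you handle $p=2$ inside the same framework and exclude or absorb the extraspecial exception case by case.
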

\begin{proof}
We may assume $p \neq 2$. There exists $(u, b) \in \mathcal{S} \backslash \mathcal{S}_{0}$ such that $J(ZB)^{M} \subseteq Z_{\le Z(D)}(B)$, where $M = LL(Zb) < 3p^{d-3}$ by Theorem \ref{thm2} and \ref{thm8} (note that $C_{D}(u) \not\simeq p^{1+2}_{+}$). By our assumption, $|Z(D)| \le p^{d-3}$ or $Z(D)$ is non-cyclic of order $p^{d-2}$. Hence $Z_{\le Z(D)}(B) \cdot J(ZB)^{N} = \{0\}$, where $N=p^{d-3}+p-1+p^{d-3}+\dots +p+1$. Thereby we deduce $LL(ZB) \le M+N < 6p^{d-3}$.
\end{proof}

Finally, we state a conjecture that is based on the results in this section:

\begin{quote}
\textit{Every block $B$  with non-abelian defect group satisfies the following inequality ?}
\[ LL(ZB) < \frac{(d-e)(d-e+1)}{2} p^{e}  \]
\end{quote}

\end{document}